\documentclass{article}
\usepackage{graphicx} 
\usepackage{amssymb}
\usepackage{amsthm}
\usepackage{mathtools}
\usepackage[pdftex,bookmarks=true]{hyperref}
\usepackage{bbm}

\DeclareMathOperator{\diag}{diag}
\DeclareMathOperator{\tr}{tr}

\newtheorem{thm}{Theorem}
\newtheorem{prop}{Proposition}
\newtheorem{Lem}{Lemma}
\newtheorem{df}{Definition}

\newcommand{\biggset}[1]{\biggl\{\,#1\,\biggr\}}

\usepackage{authblk}
\title{The argument shift method in universal enveloping algebra $U\mathfrak{gl}_d$}
\author[$\dagger$]{Yasushi Ikeda}
\author[$\dagger$,$\ddagger$]{Gerogy Sharygin}
\affil[$\dagger$]{\textit{Dept. of Mechanics and Mathematics,
Lomonosov Moscow State University,
Leninskie gory, d. 1, 119991, Moscow, Russia}}

\vspace{2mm}
\affil[$\ddagger$]{\textit{Kurchatovskiy Institut-ITEP, pl. Akademika Kurchatova, d. 1, 123182, Moscow, Russia}}
\date{May 2023}

\begin{document}

\maketitle

\begin{abstract}
    We prove the conjecture that allows one extend the argument shifting procedure from symmetric algebra $S\mathfrak{gl}_d$ of the Lie algebra $\mathfrak{gl}_d$ to the universal enveloping algebra $U\mathfrak{gl}_d$. Namely, it turns out that the iterated quasi-derivations of the central elements in $U\mathfrak{gl}_d$ commute with each other. Here quasi-derivation is a linear operator on $U\mathfrak{gl}_d$, constructed by Gurevich, Pyatov and Saponov. This allows one better understand the structure of \textit{argument shift algebras} (or \textit{Mishchenko-Fomenko algebras}) in the universal enveloping algebra of $\mathfrak{gl}_d$. 
\end{abstract}
\section{Introduction}
Let $\mathfrak g$ be a Lie algebra, and $\xi\in\mathfrak g$ a (regular) element. Recall that the symmetric algebra $S\mathfrak g$ bears a natural structure of Poisson algebra: we interpret $S\mathfrak g$ as the algebra of polynomial functions on $\mathfrak g^*$ and use the Lie brackets in $\mathfrak g$ to define of Poisson brackets on linear functions. This natural Poisson structure bears the name of \textit{Lie-Poisson structure} or \textit{Kirillov-Kostant-Sourieau structure}. 

\textit{Argument shift subalgebra $A_\xi(\mathfrak g)$ in $S\mathfrak g$} is the Poisson commutative subalgebra (with respect to the Lie-Poisson structure), spanned by the iterated derivations of the central elements in $S\mathfrak g$ with respect to a \textit{constant} vector field $\partial_\xi$: if we choose (and fix) a basis $X_1,\dots, X_N$ in $\mathfrak g$, so that $\xi=\xi^1 X_1+\dots+\xi^N X_N$, then 
\[
\partial_\xi=\sum_{k=1}^N\xi^k\frac{\partial}{\partial x^k},
\]
where $(x^1,\dots,x^N)$ is the coordinate system on $\mathfrak g^*$ associated with the basis. Then the following is true: for any two elements $f,\,g$ in the Poisson center $\mathcal Z_\pi(\mathfrak g)$ of $S\mathfrak g$ and any two natural numbers $p,q$ the Poisson bracket of $\partial_\xi^pf$ and $\partial_\xi^qg$ vanishes:
\[
\{\partial_\xi^pf,\partial_\xi^qg\}=0.
\]

This fact was first observed by Manakov in the study of multi-dimensional rigid body equations (\cite{Man}) for $\mathfrak g=\mathfrak{so}_n$, and was later proved in full generality by Mishchenko and Fomenko (\cite{MiFo}), therefore the algebras $A_\xi$ are in many cases called \textit{Mishchenko-Fomenko algebras}. Later Vinberg \cite{Vin} observed that this construction can be easily generalised to the situation, when the $\mathfrak g^*$ and the vector field $\partial_\xi$ are replaced by an arbitrary Poisson manifold $(M,\,\pi)$ (where $\pi$ is the Poisson bivector) and a vector field $\xi$ on $M$, for which the identity $\mathcal L_\xi^2\pi=0$ holds: as before the equation $\{\partial_\xi^pf,\partial_\xi^qg\}=0$ holds for all Casimir functions $f,g$ and all $p,q\ge0$.

On the other hand, every Poisson structure gives rise to a noncommutative associative $\star$-product on smooth functions on $M$; in particular if $M=\mathfrak g^*$ and we endow $S\mathfrak g$ with the $\star$-product, we obtain the universal enveloping algebra $U\mathfrak g$. Alternatively, the Poincar\'e-Birkhoff-Witt theorem shows that the graded algebra, associated with $U\mathfrak g$ is isomorphic to $S\mathfrak g$. Hence Vinberg asked the question, \textit{whether one can raise the argument sift subalgebra $A_\xi(\mathfrak g)$ from $S\mathfrak g$ to a commutative subalgebra in $U\mathfrak g$?}

At present there are several constructions that give answers to this question, see \cite{Tar},\,\cite{Ryb},\,\cite{Mol},\,\cite{Yakimova}. These papers construct commutative subalgebras in $U\mathfrak g$, that ``raise'' $A_\xi(\mathfrak g)$. All these constructions are based on the consideration of one or another set of generators of the Poisson center in $S\mathfrak g$ (usually these generators are related with the coefficients of the characteristic polynomial), which are then raised to $U\mathfrak g$ by means of one or another constructions. Most of these constructions are based on the considerations of certain ``large'' algebras $Y$ (either Yangians, or the universal enveloping algebras of Kac-Moody algebras at critical level), that are mapped into $U\mathfrak g$. Then one can define the argument shift subalgebra $\mathcal A_\xi(\mathfrak g)$ in $U\mathfrak g$ as the image in $U\mathfrak g$ of suitable commutative subalgebras in $Y$.

On the other hand the actual mechanism of how these algebras emerge in $U\mathfrak g$ still remains quite obscure: apart from the consideration of various particular choices of generators in $\mathcal Z_\pi(\mathfrak g)$ and hence in $A_\xi(\mathfrak g)$ and then finding the way to raise them into the universal enveloping algebra (one needs to keep in mind that certain choices of generators in $A_\xi(\mathfrak g)$ can be more favorable than the others for the purpose of raising into $U\mathfrak g$), no general principles for raising a generic element of the argument shift algebra into the universal enveloping algebra has been given. It is this general approach which we endeavor to give in this paper.

The starting point of our research is pretty na\"\i ve: we ask, if there exists an operation $\hat\partial_\xi:U\mathfrak g\to U\mathfrak g$, that would ``raise'' into the universal enveloping algebra the derivation $\partial_\xi$ from above and such that the iterative application of $\hat\partial_\xi$ to the central elements of $U\mathfrak g$ gives a commutative subalgebra $\mathcal{A}_\xi(\mathfrak g)\subseteq U\mathfrak g$. It turns out, that at least in case when $\mathfrak g=\mathfrak{gl}_d$ (here $\mathfrak{gl}_d$ is either real, or complex general linear Lie algebra: our conclusions do not depend on the choice of ground field) such operation does in fact exist: it is sufficient to take 
\[
\hat\partial_\xi=\sum_{i,j=1}^d\xi^i_j\hat\partial_i^j,
\]
where $\xi=(\xi_j^i)$ is the chosen element in $\mathfrak{gl}_d$, and $\hat\partial_i^j$ are the \textit{quasi-derivations} on $U\mathfrak{gl}_d$, introduced by Gurevich, Pyatov and Saponov (see \cite{GPS} and section 2.3 below). 

Namely, in this paper we prove the following statement: 

\begin{thm}\label{thm:main}
For all $\xi\in\mathfrak{gl}_d$, all $f,g\in\mathcal{Z}(U\mathfrak{gl}_d)$ and all natural $p,q$ the elements $\hat\partial_\xi^pf,\,\hat\partial_\xi^qg$ commute with each other. 
\end{thm}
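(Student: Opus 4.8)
\section*{Proof strategy}

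The plan is to transplant into the associative setting the bi-Hamiltonian proof of the classical Mishchenko--Fomenko theorem, with the quasi-derivation $\hat\partial_\xi$ playing the role of the constant vector field $\partial_\xi$ on $\mathfrak{gl}_d^*$. It is worth recalling why no shortcut is available already classically: the affine shift $x\mapsto x+t\xi$ is \emph{not} a Poisson morphism of $S\mathfrak{gl}_d$, so $\partial_\xi^p f$ is never a Casimir, and one must instead use the pencil $\pi+\lambda\pi_\xi$, recognise $\partial_\xi^p f$ as a Taylor coefficient of a Casimir of the pencil, and close the argument by the Lenard--Magri recursion. The same obstruction persists quantally --- there is no algebra derivation of $U\mathfrak{gl}_d$ with $E_i^j\mapsto\xi_i^j$, nor an automorphism $E_i^j\mapsto E_i^j+z\xi_i^j$, because the relations of $\mathfrak{gl}_d$ are not translation invariant --- and this is precisely why the GPS quasi-derivation, and not an honest derivation, is the right tool.

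\textbf{Steps 1--2 (a shift-covariant generating function for the centre).} Write $L=(E_i^j)$ in the GPS normalisation and let $C(u)=\operatorname{qdet}(u+L)$ be the Capelli-type ($\rho$-shifted, column-ordered) quantum determinant, whose coefficients $c_1,\dots,c_d$ are algebraically independent generators of $\mathcal Z(U\mathfrak{gl}_d)$. Using that the argument-shift subalgebra is $\hat\partial_\xi$-stable --- which follows from the quasi-derivation formalism of Section~2.3 (the quasi-Leibniz rule together with the commutation relations among the $\hat\partial_i^j$) --- the theorem reduces to the case $f=c_a$, $g=c_b$. Next I would prove, by induction on $p$ from the GPS action of $\hat\partial_\xi$ on the generators and the quasi-Leibniz rule, that for this normalisation
\[
\sum_{p\ge 0}\frac{z^{p}}{p!}\,\hat\partial_\xi^{\,p}\,C(u)\;=\;C_\xi(u,z),
\]
where $C_\xi(u,z)$ is an explicit $z$-deformation of $C(u)$ --- morally $\operatorname{qdet}(u+L+z\xi)$ carrying the normal-ordering corrections forced by $\hat\partial_\xi$. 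The quantum corrections produced en route must be shown to die against the centrality of the $c_a$; this is where the Capelli choice of $C(u)$ pays off.

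\textbf{Step 3 (commutativity of the shifted invariants).} It then suffices to show that the coefficients of $C_\xi(u,z)$ commute with those of $C_\xi(v,w)$ in $U\mathfrak{gl}_d[z,w]$ for all $u,v,z,w$; expanding in $z$ and $w$ and invoking Step~2 gives $[\hat\partial_\xi^p f,\hat\partial_\xi^q g]=0$. I would establish this by reflection-equation calculus: the shifted matrix $M=L+z\xi$ still obeys $[M_1,M_2]=[L_1,L_2]$, hence a relation differing from the one for $L$ only by an explicit $z$-linear scalar term (of the shape $z(\xi_1-\xi_2)P$); propagating this extra term through the fusion computation that proves the $c_a$ central, one checks that it contributes nothing to the final commutator. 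This cancellation is the associative avatar of the Lenard--Magri phenomenon, and it should dovetail with the known centrality proofs for quantum immanants once the scalar correction is carried along. A route closer to the methods of the paper is a direct induction on $p+q$: apply $\hat\partial_\xi$ to the inductively vanishing $[\hat\partial_\xi^{p-1}c_a,\hat\partial_\xi^{q}c_b]$, use the quasi-Leibniz rule to isolate $[\hat\partial_\xi^{p}c_a,\hat\partial_\xi^{q}c_b]$, and show that the residual quasi-correction terms cancel by centrality of $c_a,c_b$, the relations among the $\hat\partial_i^j$, and the lower cases.

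\textbf{Main obstacle.} On either route the crux is the control of the quasi-Leibniz corrections. Since $\hat\partial_\xi$ is only a quasi-derivation, neither $\hat\partial_\xi(xy)=\hat\partial_\xi(x)y+x\hat\partial_\xi(y)$ nor the derivation identity on commutators holds; in their place stands a bilinear term assembled from the GPS twist and the full matrix $(\hat\partial_i^j)$, and the entire content of the theorem is that --- after passing to matrix/$R$-matrix notation, where the combinatorics becomes tractable --- these corrections organise themselves so as to cancel pairwise or to annihilate the central generators. A secondary difficulty is arranging the identification of Step~2 for \emph{all} $p$ under a single fixed normalisation; working throughout with $\rho$-shifted column-ordered (Capelli) determinants, for which centrality of the $c_a$ and the internal consistency of the quantum corrections are automatic, is the safe way to secure this.
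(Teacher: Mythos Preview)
Your proposal is a strategy outline rather than a proof, and its central mechanism does not close. In the direct-induction variant you suggest, applying $\hat\partial_\xi$ to the inductively vanishing $[\hat\partial_\xi^{p-1}c_a,\hat\partial_\xi^{q}c_b]$ does not ``isolate'' $[\hat\partial_\xi^{p}c_a,\hat\partial_\xi^{q}c_b]$: the quasi-Leibniz rule produces \emph{both} $[\hat\partial_\xi^{p}c_a,\hat\partial_\xi^{q}c_b]$ and $[\hat\partial_\xi^{p-1}c_a,\hat\partial_\xi^{q+1}c_b]$, each of total degree $p+q$, together with the bilinear correction $\tr\bigl(\xi[\hat D(\hat\partial_\xi^{p-1}c_a),\hat D(\hat\partial_\xi^{q}c_b)]\bigr)$. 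You thus obtain a single relation among two unknowns at the new level plus a correction term for which you offer no independent control; telescoping only reshuffles the unknowns. The generating-function/$R$-matrix route is no better off: you correctly note that $M=L+z\xi$ satisfies $[M_1,M_2]=[L_1,L_2]+z(\ldots)$, but the claim that the extra scalar term ``contributes nothing to the final commutator'' when pushed through the fusion/Capelli argument is precisely the theorem, and you give no mechanism for the cancellation. Your ``Main obstacle'' paragraph is an accurate diagnosis of what is missing, not a resolution of it.

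The paper's proof avoids this trap by not comparing two moving iterates at all. It first reduces by $GL_d$-equivariance and density to $\xi$ diagonal with simple spectrum, and then invokes Rybnikov's theorem that $\mathcal A_\xi(\mathfrak{gl}_d)$ equals the centraliser of the diagonal subalgebra together with the fixed quadratic elements $\hat T_i(\xi)=\sum_{j\neq i}\frac{e^j_ie^i_j}{\xi_i-\xi_j}$. The induction is then on $m$ in $[\hat T_i(\xi),\hat\partial_\xi^m f]=0$, with $\hat T_i(\xi)$ held fixed. This closes because (i) $\hat\partial_\xi(\hat T_i(\xi))$ is a \emph{scalar}, so the second unknown commutator vanishes identically, and (ii) the remaining correction $\tr\bigl(\xi[\hat D(\hat T_i(\xi)),\hat D(\hat\partial_\xi^{m-1}f)]\bigr)$ is handled by showing that the set $\{x:\tr(\xi[\hat D(\hat T_i(\xi)),\hat D x])=0\}$ contains the centre and is $\hat\partial_\xi$-stable (again because the second quasi-derivatives of $\hat T_i(\xi)$ are numerical and symmetric in a way that forces the extra terms to cancel). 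The entire argument thus rests on external input --- Rybnikov's centraliser description --- which converts the problem into one where only one side of the commutator moves; your proposal lacks any analogous device.
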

\noindent This statement can be extended in a natural way to embrace any semisimple Lie algebra $\mathfrak g$ and any element $\xi$ in it, but we postpone the discussion of this and other generalisations of theorem 1 to a forthcoming paper.

The rest of the paper is organised as follows. In the next section  (section 2) we recall basic definitions and results from the theory of the universal enveloping algebras, including the definition of quasi-derivations and the list of their properties needed for our proof, as well as the key property (theorem \ref{thm:Ryb}) of the Mishcheko-Fomenko subalgebras $\mathcal{A}_\xi(\mathfrak g)$ in $U\mathfrak g$, needed for the proof. Section 3 contains the proof of the main theorem 1; it is based on the key result from theortem \ref{thm:Ryb} and is obtained by double induction. Finally, section 4 is given to the discussion of possible generalisations and corollaries of this statement.

\section{Definitions and preliminary results}
\subsection{Universal enveloping algebra $U\mathfrak{gl}_d$}
Let $d\ge1$ be an integer. The Lie algebra $\mathfrak{gl}_d$ over a field $\mathbb K$ (here we assume $\mathbb K=\mathbb R$ or $\mathbb C$) can be defined as the linear space with basis $e_j^i,\,i,j=1,\dots,d$, and the Lie bracket given by the formula
\begin{equation}\label{eq:gldrelations}
[e_j^i,e_q^p]=\delta_j^pe^i_q-e_j^p\delta^i_q.
\end{equation}
Then the universal enveloping algebra $U\mathit{gl}_d$ of $\mathit{gl}_d$ can be defined as the associative $\mathbb K$-algebra equal to the quotient of the free unital associative algebra $\mathbb{K}\langle e^i_j\rangle$ over the field $\mathbb K$ on generators $e^i_j$ by relations \eqref{eq:gldrelations}
\begin{equation}
U\mathit{gl}_d=\mathbb{K}\langle e^i_j\mid i,j=1,\dots,d\rangle/\left\langle e^{i}_{j}e^{p}_{q}-e^{p}_{q}e^{i}_{j}-\delta^{p}_{j}e^{i}_{q}+e^{p}_{j}\delta^{i}_{q}\right\rangle
\end{equation}
The Lie algebra $\mathfrak{gl}_d$ is embedded into $U\mathfrak{gl}_d$ as the linear span of the variables $e_j^i$. 

Below we will use few standard identities on the elements in $U\mathfrak{gl}_d$; in order to formulate them, it is convenient to introduce the matrix $e$ of generators, i.e. $e=(e_j^i)$, where $i$ enumerates the rows and $j$ enumerates the columns, so that
\[
(e^p)^k_\ell=\sum_{i_1,i_2,\dots,i_{p-1}=1}^de^k_{i_1}e^{i_1}_{i_2}\dots e^{i_{p-1}}_\ell.
\]
Then we have:
\begin{align}\label{eq:three}
\bigl[(e^p)^i_j,e^k_\ell\bigr]=\bigl[e^i_j,(e^p)^k_\ell\bigr]=\delta^k_j(e^p)^i_\ell-(e^p)^k_j\delta^i_\ell,&&\forall i,\forall j,\forall, k,\forall\ell
\end{align}
for any nonnegative integer $p$; this equation can be proved by induction on $p$. More generally:
\begin{equation}\label{eq:commrelimp}
\bigl[(e^m)^i_j,(e^n)^k_\ell\bigr]=\sum_{a=1}^{\min(m,n)}\left((e^{a-1})^k_j(e^{m+n-a})^i_\ell-(e^{m+n-a})^k_j(e^{a-1})^i_\ell\right).
\end{equation}
This can again be proved by induction.

The group $GL_d=GL_d(\mathbb K)$ of invertable $d\times d$ matrices acts on $U\mathfrak{gl}_d$ by conjugations and the center of $U\mathfrak{gl}_d$ is equal to the subspace of $GL_d$-invariant elements in $U\mathfrak{gl}_d$. This center is equal to the free commutative algebra with $d$ generators, which can be chosen for instance as $\tau_k=\tr(e^k),\,k=1,\dots,d$, or in more explicit terms:
\[
\tau_k=\sum_{i_1,i_2,\dots,i_k=1}^de^{i_1}_{i_2}e^{i_2}_{i_3}\dots e^{i_k}_{i_1}.
\]
The algebra $U\mathfrak{gl}_d$ is closely related with the symmetric algebra $S\mathfrak{gl}_d$: one can regard $S\mathfrak{gl}_d$ as the algebra of polynomials in variables $e_j^i$; below we will denote by $e$ the matrix of variables $e_j^i$ in $S\mathfrak{gl}_d$, when this does not cause confusion. 

In fact, the natural grading in the free algebra $\mathbb K\langle e^i_j\rangle$ induces a filtration on $U\mathfrak{gl}_d$. Then by Poincar\'e-Birkhoff-Witt (PBW) theorem the graded algebra associated with this filtration is isomorphic to $S\mathfrak{gl}_n$; one can now use the filtration to induce the Poisson structure on $S\mathfrak{gl}_d$: it is the unique Poisson structure, for which
\[
\{e_j^i,e_q^p\}=\delta_j^pe^i_q-\delta_q^ie^p_j.
\]
The same PBW theorem guaranties that the symmetrisation map, i.e. the unique linear map $\sigma:S\mathfrak{gl}_d\to U\mathfrak{gl}_d$, which sends $x^n$ to $x^n$ for any linear combination of variables $x$, induces linear isomorphism of $S\mathfrak{gl}_d\cong U\mathfrak{gl}_d$. In effect, this map not only induces linear isomophism, it also intertwines the action of $GL_d$ on both sides and by virtue of this induces a (linear) isomorphism of centers 
\[
\sigma:\mathcal Z_\pi(S\mathfrak{gl}_d)\cong\mathcal Z(U\mathfrak{gl}_d).
\]
Here $\mathcal Z_\pi(S\mathfrak{gl}_d)$ and $\mathcal Z(U\mathfrak{gl}_d)$  denote the Poisson center of $S\mathfrak{gl}_d$ and the usual center of the universal enveloping algebra. This isomorphism allows one choose other generators of $\mathcal Z(U\mathfrak{gl}_d)$; for instance if we consider the universal characteristic polynomial in $S\mathfrak{gl}_d$:
\[
\det(e-\lambda\mathbbm 1)=\sum_{k=0}^d(-1)^{d-k}\lambda^{d-k} I_{k},
\]
then the elements $I_k\in S\mathfrak{gl}_d$ are generators in the Poisson center $\mathcal Z_\pi(S\mathfrak{gl}_d)$. Their symmetrisations $\tilde I_k=\sigma(I_k)$ are the generators of $\mathcal Z(U\mathfrak{gl}_d)$.

\subsection{Argument shift method}
Let $\xi$ be an element in $\mathfrak{gl}_d$; one can regard it as the numerical matrix $\xi=(\xi_i^j)\in M(d,\mathbb K)$. The following theorem was first proven by Fomenko and Mishchenko, \cite{MiFo}:
\begin{thm}\label{thm:MiFo}
Let $\partial_\xi=\sum_{i,j=1}^d\xi_i^j\partial_j^i$ be the derivation of the polynomial algebra $S\mathfrak{gl}_d$, where
\[
\partial_j^i=\frac{\partial}{\partial e_i^j},\ \mbox{so that}\ \partial_j^i(e^p_q)=\delta_j^p\delta^i_q;
\]
then for any $f,g\in\mathcal Z_\pi(S\mathfrak{gl}_d)$ and any nonnegative integers $p,q$ we have
\[
\{\partial_\xi^pf,\partial_\xi^qg\}=0.
\]
\end{thm}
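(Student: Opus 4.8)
The plan is to deduce the identity from the \emph{bi-Hamiltonian} structure underlying the problem, exploiting that the Lie--Poisson bracket $\pi=\{\cdot,\cdot\}$ on $S\mathfrak{gl}_d$ depends \emph{linearly} on the point. For a polynomial $f\in S\mathfrak{gl}_d$ write $f_\lambda$ for the element $e\mapsto f(e+\lambda\xi)$; this is well defined and depends polynomially on the scalar $\lambda$, because translating all the variables by the constant matrix $\lambda\xi$ is an algebra automorphism of $S\mathfrak{gl}_d$. Since $\partial_\xi$ is exactly $\frac{d}{d\lambda}\big|_{\lambda=0}$ read through this substitution, Taylor's formula gives $f_\lambda=\sum_{p\ge0}\frac{\lambda^p}{p!}\,\partial_\xi^pf$ (a finite sum), and likewise for $g_\mu$. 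Hence it is enough to prove the single relation $\{f_\lambda,g_\mu\}=0$ for all scalars $\lambda,\mu$: comparing the coefficients of $\lambda^p\mu^q$ then yields $\{\partial_\xi^pf,\partial_\xi^qg\}=0$ for all $p,q$.

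To prove $\{f_\lambda,g_\mu\}=0$ I would bring in the pencil of Poisson brackets obtained by ``freezing''. Let $\{\cdot,\cdot\}'$ be the constant-coefficient bracket with $\{e^i_j,e^p_q\}'=\delta^p_j\xi^i_q-\delta^i_q\xi^p_j$ (i.e.\ the original bracket with $e$ replaced by the fixed matrix $\xi$), with bivector $\pi'$, and put $\pi_\lambda:=\pi+\lambda\pi'$. Linearity of $\pi$ in $e$ gives $\mathcal L_{\partial_\xi}\pi=\pi'$ and $\mathcal L_{\partial_\xi}\pi'=0$; applying $\mathcal L_{\partial_\xi}$ to the Jacobi identity $[\pi,\pi]=0$ and then once more (the Schouten bracket being a derivation for $\mathcal L_{\partial_\xi}$) gives $[\pi,\pi']=0$ and $[\pi',\pi']=0$, so every $\pi_\lambda$ is a Poisson bivector. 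Moreover the substitution automorphism $e\mapsto e+\lambda\xi$ intertwines $\pi_\lambda$ with $\pi$, hence sends Casimirs of $\pi$ to Casimirs of $\pi_\lambda$; in particular, if $f\in\mathcal Z_\pi(S\mathfrak{gl}_d)$ then $f_\lambda$ is a Casimir of $\pi_\lambda$, and likewise $g_\mu$ is a Casimir of $\pi_\mu$.

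Now for $\lambda\ne\mu$ write $\pi=\pi_0$ as the affine combination $\pi_0=\frac{-\mu}{\lambda-\mu}\,\pi_\lambda+\frac{\lambda}{\lambda-\mu}\,\pi_\mu$ (the coefficients sum to $1$, which is all one needs since $\pi_t$ is affine in $t$). Because $f_\lambda$ is a Casimir of $\pi_\lambda$ the first term contributes $0$ to $\{f_\lambda,g_\mu\}_{\pi_0}$, and because $g_\mu$ is a Casimir of $\pi_\mu$ so does the second; hence $\{f_\lambda,g_\mu\}=0$ for all $\lambda\ne\mu$. But $\{f_\lambda,g_\mu\}$ is a polynomial in $(\lambda,\mu)$ with coefficients in $S\mathfrak{gl}_d$, and it vanishes on the Zariski-dense set $\{\lambda\ne\mu\}$ (here one uses that $\mathbb K$ is infinite), so it vanishes identically. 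This is the relation needed, and the theorem follows by the coefficient comparison explained above.

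No deep obstacle arises once the pencil is in place; the substantive input is Vinberg's observation that $\mathcal L^2_{\partial_\xi}\pi=0$, equivalently that $\pi'$ is a Poisson bivector compatible with $\pi$, which is itself just a reformulation of the Lie--Poisson bracket having degree $\le1$ in $e$. The steps calling for care are (i) the verification that every $\pi_\lambda$ is Poisson --- carried out above via the Schouten bracket, or directly on generators --- and (ii) the passage from ``$f$ is a Casimir of $\pi$'' to ``$f_\lambda$ is a Casimir of $\pi_\lambda$'', which I would obtain from the substitution automorphism, or concretely from the fact that $\tr\bigl((e+\lambda\xi)^k\bigr)$ is $GL_d$-invariant as a function of $e+\lambda\xi$; of these, (ii) is where the linear dependence of $\pi$ on the point is genuinely used and is the main thing to get right. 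One could alternatively run a double induction on $(p,q)$ based on the identity $\partial_\xi\{a,b\}=\{\partial_\xi a,b\}+\{a,\partial_\xi b\}+\{a,b\}'$, but the bi-Hamiltonian argument is shorter and exhibits the role of the frozen structure $\pi'$ most transparently.
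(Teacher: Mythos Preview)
Your argument is correct: this is the classical Magri bi-Hamiltonian proof of the Mishchenko--Fomenko theorem, carried out cleanly. The compatibility of $\pi$ and $\pi'$, the identification of $f_\lambda$ as a Casimir of $\pi_\lambda$ via the translation automorphism, and the affine decomposition of $\pi_0$ in terms of $\pi_\lambda,\pi_\mu$ are all stated and used correctly; the extension from $\lambda\ne\mu$ to all $(\lambda,\mu)$ by polynomiality is the standard closing step.

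As for comparison with the paper: the paper does not actually prove Theorem~\ref{thm:MiFo}. It is quoted as a known result and attributed to Mishchenko and Fomenko \cite{MiFo}, with the Vinberg generalisation (the condition $\mathcal L_\xi^2\pi=0$) mentioned in the introduction. So there is no in-paper proof to compare against. Your proof is precisely the bi-Hamiltonian reformulation that Vinberg's observation points to, and it is the argument most readers would supply if asked; the original \cite{MiFo} proof is phrased somewhat differently (via the ``shifted invariants'' $f(e+\lambda\xi)$ and a direct check that any two of them Poisson-commute), but the content is the same. One small stylistic point: you derive $[\pi',\pi']=0$ by differentiating the Jacobi identity for $\pi$ twice; it is perhaps worth remarking that this also follows immediately because $\pi'$ has constant coefficients, so the Jacobiator vanishes termwise. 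Either way the proof stands.
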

In effect, Mishchenko and Fomenko proved the same theorem for arbitrary Lie algebra $\mathfrak g$ and a constant vector field $\partial_\xi$ on its dual space $\mathfrak g^*$ (we recall that $S\mathfrak g$ should be regarded as the space of polynomial functions on $\mathfrak g^*$). We will deal with this general case in a forthcoming paper.

It follows from this theorem that the following subspace in $S\mathfrak{gl}_d$
\[
A_\xi(\mathfrak{gl}_d)=\{\partial_\xi^pf\mid f\in\mathcal Z_\pi(S\mathfrak{gl}_d),\,p=0,1,2,\dots\}
\]
is in effect a Poisson commutative subalgebra. It is not difficult to see that if the element $\xi\in\mathfrak{gl}_d$ is \textit{regular} (i.e. if its centralizer has dimension $d$) then the subalgebra $A_\xi(\mathfrak{gl}_d)$ is maximal (Poisson) commutative subalgebra in $S\mathfrak{pl}_d$. This is a free algebra, generated for instance by the elements $\partial_\xi^pI_k,\,p=0,\dots,k-1,\,k=1,\dots,d$. The same statement holds verbatim for any Lie algebra $\mathfrak g$ (provided we choose some set of generators of $\mathcal Z_\pi(S\mathfrak g)$); the algebras $A_\xi(\mathfrak g)$ are called the \textit{argument shift (sub)algebras} or \textit{Mishchenko-Fomenko algebras}.

The algebras $A_\xi(\mathfrak g)$ have been studied a lot in the last thirty to forty years. These constructions were used to describe maximal commutative subalgebras in the algebras of polynomial functions on $\mathfrak g^*$, they were shown to give involutive families of first integrals in the case of many important dynamical systems. A remarkable property of these algebras is that they are determined by their quadratic part. Namely, without loss of generality we can assume that the regular element $\xi$ is given by diagonal matrix $\xi=\mathrm{diag}(\xi_1,\dots,\xi_d)$, so that $\xi_i\neq\xi_j$ if $i\neq j$. Then it has been shown by Vinberg \cite{Vin} and Rybnikov \cite{Ryb2} that \textit{the algebra $A_\xi(\mathfrak{gl}_d)$ coincides with the Poisson centraliser of the subalgebra, generated by diagonal matrices and the elements $T_i(\xi),\,i=1,\dots,d$:
\[
T_i(\xi)=\sum_{j\neq i}\frac{e^j_ie^i_j}{\xi_i-\xi_j}.
\]}
A similar statement holds for arbitrary semisimple Lie algebra $\mathfrak g$.

In the same paper \cite{Vin} Vinberg asked the question, whether the algebras $A_\xi(\mathfrak g)$ can be ``raised'' to a commutative subalgebra $\mathcal A_\xi(\mathfrak g)$ in the universal enveloping algebra $U\mathfrak g$? In the time that passed since 1990, several constructions for this purpose have been suggested: first, in 2000 A. Tarasov (see \cite{Tar}) showed that when $\mathfrak g=\mathfrak{gl}_d$, the symmetrisations of the elements $\partial_\xi^pI_k$ do in fact commute in $U\mathfrak{gl}_d$ (this however, is not true for arbitrary $f,g\in\mathcal Z_\pi(S\mathfrak{gl}_d)$). However there are no evident ways to extend this construction from $\mathfrak{gl}_d$ to other Lie algebras. The first general construction to this end was proposed by Rybnikov in \cite{Ryb}; in this approach the algebra $\mathcal A_\xi(\mathfrak g)$ appears as the image under a suitable homomorphism of the Feigin-Frenkel center in the Kac-Moody algebra at the critical level. This general construction is now used in most papers on this subject; there are explicit formulas for the generators of the corresponding algebras for all classical series of Lie groups, see \cite{Mol},\,\cite{Yakimova}.

Algebras $\mathcal A_\xi(\mathfrak g)$, constructed in this way share many properties of their ``classical'' counterparts: in particular, if $\xi$ is regular, they are maximal commutative and are determined by their quadratic part. Below we will use this property for $\mathfrak{gl}_d$: in \cite{Ryb2} the following proposition is proved:
\begin{thm}\label{thm:Ryb}
When $\xi=\mathrm{diag}(\xi_1,\dots,\xi_d),\,\xi_i\neq\xi_j$, the algebra $\mathcal A_\xi(\mathfrak{gl}_d)$ coincides with the centraliser of the subspace in $U\mathfrak{gl}_d$, spanned by diagonal matrices and the elements
\[
\hat T_i(\xi)=\sum_{j\neq i}\frac{e^j_ie^i_j}{\xi_i-\xi_j}.
\]
\end{thm}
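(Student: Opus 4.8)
The plan is to establish the two inclusions $\mathcal A_\xi(\mathfrak{gl}_d)\subseteq C$ and $C\subseteq\mathcal A_\xi(\mathfrak{gl}_d)$, where $C$ denotes the centraliser in $U\mathfrak{gl}_d$ of the subspace $V$ spanned by the diagonal matrices $\mathfrak h$ and the elements $\hat T_i(\xi)$. The engine of the argument is the comparison of these filtered subspaces of $U\mathfrak{gl}_d$ with their associated graded images in $S\mathfrak{gl}_d$, together with the classical theorem of Vinberg and Rybnikov recalled above, which identifies the Poisson centraliser of the subspace spanned by $\mathfrak h$ and the $T_i(\xi)$ with the classical Mishchenko--Fomenko algebra $A_\xi(\mathfrak{gl}_d)$. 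Throughout I use two structural properties of $\mathcal A_\xi(\mathfrak{gl}_d)$ valid for regular diagonal $\xi$: that it is commutative and that it raises $A_\xi(\mathfrak{gl}_d)$, so that its associated graded contains $A_\xi(\mathfrak{gl}_d)$. Since commutativity forces the associated graded to be Poisson commutative and regularity makes $A_\xi(\mathfrak{gl}_d)$ a maximal Poisson commutative subalgebra, this gives the sharp equality $\mathrm{gr}\,\mathcal A_\xi(\mathfrak{gl}_d)=A_\xi(\mathfrak{gl}_d)$.

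For the inclusion $\mathcal A_\xi(\mathfrak{gl}_d)\subseteq C$ I would check commutation with each spanning vector of $V$ separately. Commutation with $\mathfrak h$ amounts to the assertion that $\mathcal A_\xi(\mathfrak{gl}_d)$ lies in the zero weight space for the adjoint action of the Cartan, which holds because both the central elements and the shift operator $\partial_\xi$ (with $\xi$ diagonal) preserve the $\mathfrak h$-weight grading. Commutation with the $\hat T_i(\xi)$ I would reduce to the membership $\hat T_i(\xi)\in\mathcal A_\xi(\mathfrak{gl}_d)$, after which commutativity of $\mathcal A_\xi(\mathfrak{gl}_d)$ closes the case. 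This membership follows by a symbol-lifting argument: the classical element $T_i(\xi)$ lies in $A_\xi(\mathfrak{gl}_d)=\mathrm{gr}\,\mathcal A_\xi(\mathfrak{gl}_d)$, so some element of $\mathcal A_\xi(\mathfrak{gl}_d)$ has leading symbol $T_i(\xi)$; as $\hat T_i(\xi)$ is homogeneous of degree two with the same symbol, the two differ by terms of degree at most one, and such terms lie in $\mathfrak h\oplus\mathbb K\subseteq\mathcal A_\xi(\mathfrak{gl}_d)$, the degree $\le 1$ part of $\mathcal A_\xi(\mathfrak{gl}_d)$ being exactly the Cartan and the scalars for regular $\xi$. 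Hence $\hat T_i(\xi)\in\mathcal A_\xi(\mathfrak{gl}_d)$.

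For the reverse inclusion I would pass to the associated graded. The soft but decisive lemma is that $\mathrm{gr}\,C$ is contained in the Poisson centraliser of $\mathrm{gr}\,V$ in $S\mathfrak{gl}_d$: if $a$ centralises $V$ then $[a,v]=0$ for each spanning $v$, and since the leading symbol of a commutator in $U\mathfrak{gl}_d$ is the Poisson bracket of the leading symbols, the symbol of $a$ Poisson commutes with $\mathrm{gr}\,V$. Because $V$ is spanned by the degree one elements $\mathfrak h$ and the degree two elements $\hat T_i(\xi)$, its associated graded is exactly the span of $\mathfrak h$ and the $T_i(\xi)$, so the Vinberg--Rybnikov theorem gives $\mathrm{gr}\,C\subseteq A_\xi(\mathfrak{gl}_d)=\mathrm{gr}\,\mathcal A_\xi(\mathfrak{gl}_d)$. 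Combined with the opposite inclusion $\mathrm{gr}\,\mathcal A_\xi(\mathfrak{gl}_d)\subseteq\mathrm{gr}\,C$ coming from the first part, this yields $\mathrm{gr}\,C=\mathrm{gr}\,\mathcal A_\xi(\mathfrak{gl}_d)$. Finally, from $\mathcal A_\xi(\mathfrak{gl}_d)\subseteq C$ and the equality of their associated graded spaces I conclude $\mathcal A_\xi(\mathfrak{gl}_d)=C$ by the standard degree-by-degree dimension count inside the finite dimensional pieces of the PBW filtration.

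The step I expect to be the main obstacle is securing the structural input $\mathrm{gr}\,\mathcal A_\xi(\mathfrak{gl}_d)=A_\xi(\mathfrak{gl}_d)$ and the description of the low-degree part of $\mathcal A_\xi(\mathfrak{gl}_d)$. The classical Vinberg--Rybnikov theorem is handed to us, and the associated graded and filtered arguments are formal; what genuinely carries the proof is that the quantum Mishchenko--Fomenko algebra is a flat quantization of the classical one, with the classical algebra as its exact symbol algebra, and that for regular $\xi$ its degree $\le 1$ part is all of $\mathfrak h\oplus\mathbb K$. These two facts depend on the regularity hypothesis $\xi_i\ne\xi_j$ and are exactly what turn the soft inclusion $\mathrm{gr}\,C\subseteq A_\xi(\mathfrak{gl}_d)$ and the symbol-lifting step into the equalities the proof needs.
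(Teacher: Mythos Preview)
The paper does not prove this theorem; it is quoted from Rybnikov's paper \cite{Ryb2} as an established result and used as a black box in the proof of Theorem~\ref{thm:main}. There is therefore no in-paper argument to compare against.

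That said, your outline is essentially the standard route to such statements and matches how this result is actually obtained. The filtered-to-graded reduction is sound: from $[a,v]=0$ one gets $\{\bar a,\bar v\}=0$ for the symbols, so $\mathrm{gr}\,C$ lands in the Poisson centraliser of the span of $\mathfrak h$ and the $T_i(\xi)$, which the classical Vinberg--Rybnikov theorem identifies with $A_\xi$. Together with $\mathcal A_\xi\subseteq C$ and $\mathrm{gr}\,\mathcal A_\xi=A_\xi$, the degree-by-degree dimension count forces $C=\mathcal A_\xi$. Your identification of the crucial input---that $\mathcal A_\xi$ is a flat lift with $\mathrm{gr}\,\mathcal A_\xi=A_\xi$, forced by commutativity of $\mathcal A_\xi$ and maximality of $A_\xi$---is exactly right.

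One point to tighten. Your justification that $\mathcal A_\xi$ sits in $\mathfrak h$-weight zero invokes the shift operator $\partial_\xi$, but in this paper $\mathcal A_\xi$ denotes Rybnikov's algebra (image of the Feigin--Frenkel centre), not something built from $\partial_\xi$. The clean argument is that the Rybnikov construction is $\mathrm{Ad}$-equivariant in $\xi$; a diagonal $\xi$ is fixed by the maximal torus, so $\mathcal A_\xi$ is torus-stable and hence $\mathfrak h$-weight graded; since $\mathrm{gr}\,\mathcal A_\xi=A_\xi$ lies entirely in weight zero, so does $\mathcal A_\xi$. With this in hand your symbol-lifting step for $\hat T_i(\xi)$ is correct: the lift and $\hat T_i(\xi)$ are both weight zero with the same degree-two symbol, so their difference is weight zero of filtration degree at most one, hence in $\mathfrak h\oplus\mathbb K$; and $\mathfrak h\subseteq\mathcal A_\xi$ because the classical $A_\xi$ contains $\mathfrak h$ in degree one (the elements $\tr(\xi^{k-1}e)$, $k=1,\dots,d$, span $\mathfrak h$ by Vandermonde), and degree-one symbols lift on the nose modulo scalars.
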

In other words, these are the same elements as before but this time we regard $e^i_j$ as the generators of $U\mathfrak{gl}_d$. One can rephrase this proposition so that \textit{the centraliser of these elements (and diagonal matrices) is commutative subalgebra in $U\mathfrak{gl}_d$}. Below we will make use of this fact. Also remark that a similar statement holds for arbitrary semisimple Lie algebra.

\subsection{Quasiderivatives in $U\mathfrak{gl}_d$}
Our main interset in this paper is to extend to the universal enveloping algebra $U\mathfrak{gl}_d$ the theorem \ref{thm:MiFo}. To this end we need to ``raise'' the partial derivatives from $S\mathfrak{gl}_d$ onto this algebra, which is not easy, since the universal enveloping algebra is noncommutative. One evident way to do it is by pulling these operations to $U\mathfrak{gl}_d$ from symmetric algebra with the help of a linear isomorphism which exists by the PBW theorem (for example, with the help of symmetrisation). However this approach fails, since the inverse of the symmetrisation map $\sigma$ is hard to describe, so it is difficult to describe properties of the operations obtained in this way.

In this paper we are making use of another suitable candidate for the operations, that extend the partial derivatives from $S\mathfrak{gl}_d$ to $U\mathfrak{gl}_d$. These arew the ``quasi-derivations'', introduced earlier by Gurevich, Pyatov and Saponov, see \cite{GPS}. There are different ways to define these operations, they seem to be closely related with many important constructions on Lie algebras. For our purposes it is sufficient to use the following definition (see \cite{mypaper23})
\begin{df}
Partial quasiderivations are the operators $\hat\partial^i_j:U\mathfrak{gl}_d\to U\mathfrak{gl}_d$, uniquely determined by the following conditions:
\begin{enumerate}
    \item $\hat\partial^i_j(1)=0$;
    \item $\hat\partial^i_j(e^p_q)=\delta_j^p\delta^i_q$;
    \item
    \begin{equation}\label{eq:twleibrule}
\hat\partial^i_j(fg)=\hat\partial^i_jf\,g+f\,\hat\partial^i_jg+\sum_{k=1}^d\hat\partial^k_jf\,\hat\partial^i_kg.
    \end{equation}
\end{enumerate}
\end{df}
It is easy to prove that these operations are well-defined and that 
\[
\hat\partial^a_b\hat\partial^c_d=\hat\partial^c_d\hat\partial^a_b.
\]
In many situations it is convenient to write the operations $\hat\partial^i_j$ in the form of the matrix $\hat D=\hat\partial_i^j E^i_j:U\mathfrak{gl}_d\to M_d(U\mathfrak{gl}_d)$ (here $M_d(U\mathfrak{gl}_d)$ denotes the algebra of $d\times d$ matrices with entries in $U\mathfrak{gl}_d$) where $E^i_j$ is the matrix unit, i.e. the matrix all whose entries are equal to $0$ except for the element at the intersection of the $j$-th column and the $i$-th row, which is equal to $1$\footnote{In this notation $e=\sum_{i,j=1}^de^i_jE^i_j$.}. In this notation the equality \eqref{eq:twleibrule} turns into
\begin{equation}\label{eq:leibnizmatrix}
\hat D(fg)=\hat Df\,g+f\,\hat Dg+\hat Df\,\hat Dg.
\end{equation}
This equation is often convenient to write down the computations in a concise form. For instance, we have the following (see \cite{Yasushi22}):
\begin{prop}\label{prop:formula}
Quasiderivations of the matrix elements of the powers of the generating matrix $e$ of $U\mathfrak{gl}_d$ are given by the formula
\begin{equation}\label{eq:importformula}
\hat D\left((e^n)^i_j\right)=\sum_{m=0}^{n-1}\Bigl(f^{(n-m-1)}_+(e)^i(e^m)_j+f^{(n-m-1)}_-(e)(e^m)^i_j\Bigr)^T,
\end{equation}
where ${}^T$ is the transpositiona and for a $d\times d$ matrix $A$ we denote by $A^1,\dots, A^d$ its columns and by $A_1,\dots A_d$ its rows; and $f^{(n)}_\pm(x),\,n=0,1,\dots$ are the polynomials
\begin{equation}\label{eq:polynomials}
f^{(n)}_\pm(x)=\frac{(x+1)^n\pm(x-1)^n}2=\sum_{m=0}^n\frac{1\pm(-1)^{n-m}}2\binom{n}mx^m.
\end{equation}
We call the operator $\hat D$ the \textbf{matrix-valued quasi-derivation}. 
\end{prop}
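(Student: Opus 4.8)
\emph{Proof strategy.} The plan is to establish \eqref{eq:importformula} by induction on $n$, with the matrix Leibniz rule \eqref{eq:leibnizmatrix} as the only structural tool. The base cases are immediate: for $n=0$ one has $(e^0)^i_j=\delta^i_j$, hence $\hat D\bigl((e^0)^i_j\bigr)=\delta^i_j\hat D(1)=0$, which is the empty sum; for $n=1$ the right-hand side is the single $m=0$ summand, and since $f^{(0)}_+\equiv1$, $f^{(0)}_-\equiv0$ it collapses to $\hat D(e^i_j)$, the elementary matrix produced by the definition of $\hat\partial^i_j$. For the inductive step I would write $(e^{n+1})^i_j=\sum_{k=1}^d e^i_k\,(e^n)^k_j$ and apply \eqref{eq:leibnizmatrix}:
\[
\hat D\bigl((e^{n+1})^i_j\bigr)=\sum_{k=1}^d\Bigl(\hat D(e^i_k)\,(e^n)^k_j+e^i_k\,\hat D\bigl((e^n)^k_j\bigr)+\hat D(e^i_k)\,\hat D\bigl((e^n)^k_j\bigr)\Bigr),
\]
substitute the inductive formula for $\hat D\bigl((e^n)^k_j\bigr)$ into the last two summands, and use that every $\hat D(e^i_k)$ is a matrix unit (up to transposition), so that the matrix products reduce to selecting a single row or column of the factor on the other side.

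Performing the contraction over $k$, the three summands play complementary roles. The term $\sum_k\hat D(e^i_k)\,(e^n)^k_j$ produces the new extreme value of the summation index, namely the $m=n$ summand carrying the full power $e^n$ with coefficient $f^{(0)}_+\equiv1$. The term $\sum_k e^i_k\,\hat D\bigl((e^n)^k_j\bigr)$ — the ``ordinary'' Leibniz contribution — amounts to multiplying the inductive expression by one more factor of $e$, which is exactly the operation $f^{(N)}_\pm\mapsto x\,f^{(N)}_\pm$ on the coefficient polynomials. The remaining term $\sum_k\hat D(e^i_k)\,\hat D\bigl((e^n)^k_j\bigr)$ — the \emph{quadratic} term of \eqref{eq:leibnizmatrix}, which has no analogue in the usual Leibniz rule — carries the weight of $e$ one unit lower and supplies precisely the summand $f^{(N)}_\pm\mapsto f^{(N)}_\mp$, i.e.\ the sign flip between the two families of polynomials. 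Adding the three contributions and invoking the elementary recursion
\[
f^{(N+1)}_\pm(x)=x\,f^{(N)}_\pm(x)+f^{(N)}_\mp(x),\qquad f^{(0)}_+\equiv1,\quad f^{(0)}_-\equiv0,
\]
which follows from $(x\pm1)^{N+1}=(x\pm1)(x\pm1)^N$ together with \eqref{eq:polynomials}, reassembles the right-hand side of \eqref{eq:importformula} with $n$ replaced by $n+1$. In practice this is cleanest if one first rewrites everything through $(e\pm\mathbbm1)^N=f^{(N)}_+(e)\pm f^{(N)}_-(e)$, manipulates the two generating matrices $(e+\mathbbm1)^N$ and $(e-\mathbbm1)^N$ separately and re-symmetrises at the very end; the generating identity $\sum_N f^{(N)}_\pm(x)t^N=\tfrac12\bigl((1-t(x+1))^{-1}\pm(1-t(x-1))^{-1}\bigr)$ even lets one do all of the bookkeeping at once.

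The only genuine difficulty I anticipate is the index and transposition bookkeeping: one must track which slot of each matrix unit $\hat D(e^i_k)$ is being contracted, where the transposition ${}^T$ in \eqref{eq:importformula} is generated, and the fact that the $i$-th column appears in the $f_+$-summands while the $j$-th row appears in the $f_-$-summands, all in a way stable under $n\mapsto n+1$. The one conceptual point — the only part that is not a routine computation — is that the quadratic term of \eqref{eq:twleibrule}, which is precisely what distinguishes quasi-derivations from ordinary derivations, is exactly what replaces the plain powers $x^n$ one would obtain for the classical matrix $(\partial^i_j)$ by the shifted averages $\tfrac12\bigl((x+1)^n\pm(x-1)^n\bigr)$. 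As a safeguard on conventions I would first verify the case $n=2$ directly from the three defining properties of $\hat\partial^i_j$ before committing to the general induction.
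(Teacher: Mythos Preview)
The paper does not actually prove Proposition~\ref{prop:formula}; it merely states the result with a pointer to \cite{Yasushi22} and then, in the discussion that follows, proves a weaker structural statement (that $\hat\partial^p_q((e^n)^i_j)$ is a linear combination of products $(e^s)^{\bullet}_{\bullet}(e^t)^{\bullet}_{\bullet}$) by the same kind of induction you describe, using the twisted Leibniz rule together with the commutation relation \eqref{eq:three}. So there is no in-text proof to compare against; your outline is the natural route and is almost certainly the one taken in the cited reference.

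Your strategy is sound: the recursion $f^{(N+1)}_\pm(x)=x\,f^{(N)}_\pm(x)+f^{(N)}_\mp(x)$ is exactly what the three terms of \eqref{eq:leibnizmatrix} produce, and your reading of which term contributes which piece is correct. One small caution: saying that the Leibniz rule is ``the only structural tool'' is slightly optimistic. In the middle term $\sum_k e^i_k\,\hat D\bigl((e^n)^k_j\bigr)$ the scalar $e^i_k$ multiplies the matrix entries of $\hat D\bigl((e^n)^k_j\bigr)$ from the \emph{left}, and those entries are themselves noncommuting elements of $U\mathfrak{gl}_d$; to recombine the sum over $k$ into $(e\cdot f^{(N)}_\pm(e))^i$ in the form required by \eqref{eq:importformula} you will need either to invoke \eqref{eq:three} once (as the paper does in its post-proposition computation) or to keep the transpose ${}^T$ firmly in place throughout and track the order of the $U\mathfrak{gl}_d$-factors inside each matrix entry. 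This is precisely the ``index and transposition bookkeeping'' you already flagged, so there is no real gap---just be aware that the noncommutativity of the entries, not only of the matrices, is where it bites.
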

In what follows we will consider the operator 
\[
\hat\partial_\xi=\sum_{i,j=1}^d\xi_i^j\hat\partial^i_j,
\]
where $\xi_i^j$ are some numerical coefficients; we will denote by $\xi=\xi_i^j E^i_j$ the corresponding matrix; then 
\[
\xi\hat D=(\xi_i^j E^i_j)(\hat\partial_p^q E^p_q)=\xi^j_i\hat\partial_j^q E^i_q, 
\]
so $\hat\partial_\xi=\tr(\xi\hat D)$. We will call this operator \textit{the directional quasi-derivation} on $U\mathfrak{gl}_d$.

One can use the formula \eqref{eq:importformula} from proposition \ref{prop:formula} and the formula \eqref{eq:three} to show that the image of the operator $\hat\partial_\xi$ on central elements lie in the module over the center of $U\mathfrak{gl}_d$ generated by the elements $\tr(\xi^i_j(e^k)_i^j)=\tr(\xi(e^k)^T),\,k=1,2,\dots$ (in fact, they generate this module), i.e. that for any central element $f\in\mathcal Z(U\mathfrak{gl}_d)$ its quasiderivation $\hat\partial_\xi f$ is of the form $\sum_{k=0}^N a_k\tr(\xi(e^k)^T)$, where $a_k\in\mathcal Z(U\mathfrak{gl}_d)$. This can be done by straightforward computations with the help of formula \eqref{eq:importformula}, if we represent the central elements in terms of generators $\tau_k$. We will need this fact below, so let us sketch a proof here (independent of this formula).

First, we show that $\hat\partial^p_q((e^n)^i_j)$ can be written in one of the following forms 
\[
\begin{aligned}
&\sum_{0\le s+t\le n-1}\left(a(n)_{s,t}(e^s)^p_j(e^t)^i_q+b(n)_{s,t}(e^s)^p_q(e^t)^i_j\right)\\
&\sum_{0\le s+t\le n-1}\left(\alpha(n)_{s,t}(e^s)^i_q(e^t)^p_j+\beta(n)_{s,t}(e^s)^i_j(e^t)^p_q\right)
\end{aligned}
\]
for some numerical constants $a(n)_{s,t},\,b(n)_{s,t}$ and $\alpha(n)_{s,t},\,\beta(n)_{s,t}$. This can be done by induction (we assume summation over repeating indices):
\[
\begin{aligned}
\hat\partial^p_q((e^{n+1})^i_j)&=\hat\partial^p_q((e^n)^i_ke^k_j)\\
&=\hat\partial^p_q((e^n)^i_k)e^k_j+(e^n)^i_k\hat\partial^p_q(e^k_j)+\hat\partial^p_\ell((e^n)^i_k)\hat\partial^\ell_q(e^k_j)\\
&=\sum_{s,t}\left(a(n)_{s,t}(e^s)^p_k(e^t)^i_qe^k_j+b(n)_{s,t}(e^s)^p_q(e^t)^i_ke^k_j\right)\\
&\quad+(e^0)^p_j(e^n)^i_q+\hat\partial^p_j((e^n)^i_q).
\end{aligned}
\]
Now by \eqref{eq:three}
\[
\begin{aligned}
(e^s)^p_k(e^t)^i_qe^k_j&=(e^{s+1})^p_j(e^t)^i_q+(e^s)^p_k[(e^t)^i_q,e^k_j]\\
&=(e^{s+1})^p_j(e^t)^i_q+(e^s)^p_k(\delta^k_q(e^t)^i_j-(e^t)^k_q\delta^i_j)\\
&=(e^{s+1})^p_j(e^t)^i_q+(e^s)^p_q(e^t)^i_j-(e^{s+t})^p_q(e^0)^i_j
\end{aligned}
\]
Finally, by inductive hypothesis:
\[
\hat\partial^p_j((e^n)^i_q)=\sum_{0\le s+t\le n-1}\left(a(n)_{s,t}(e^s)^p_q(e^t)^i_j+b(n)_{s,t}(e^s)^p_j(e^t)^i_q\right).
\]
Summing up these three terms we conclude that 
\[
\hat\partial^p_q((e^{n+1})^i_j=\sum_{0\le s+t\le n}\left(a(n+1)_{s,t}(e^s)^p_j(e^t)^i_q+b(n+1)_{s,t}(e^s)^p_q(e^t)^i_j\right)
\]
for some numerical constants $a(n+1)_{s,t},\,a(n+1)_{s,t}$. Similarly we show the existence of $\alpha(n)_{s,t}$ and $\beta(n)_{s,t}$. Now we can show that $\hat D(f)$ can be written in the form
\[
\hat D(f)=\sum_k a_k(f)(e^k)^T
\]
for some central elements $a_k$ (here $e^T$ denotes the transposed matrix): for any $\tau_n=(e^n)^i_i$ we have
\[
\begin{aligned}
\hat D(\tau_n)&=\sum_{p,q=1}^d\hat\partial^p_q(\tau_k)E_p^q\\
&=\sum_{p,q=1}^d\sum_{s,t=0}^{n-1}\left(a(n)_{s,t}(e^{s+t})^p_q+b(n)_{s,t}\tau_t(e^s)^p_q\right)E_p^q\\
&=\sum_{k=0}^{n-1} a_k(\tau_n)(e^k)^p_qE_p^q.
\end{aligned}
\]
The general case now follows by induction from this observation, matrix Leibniz rule \eqref{eq:leibnizmatrix} and the relation \eqref{eq:commrelimp}:
\[
(e^k)^s_q(e^\ell)^p_s=(e^{k+\ell})^p_q+\sum_{a=1}^{\min{k,\ell}}\left(\tau_{k+\ell-a}(e^{a-1})^p_q-\tau_{a-1}(e^{k+\ell-a})^p_q\right).
\]
Finally
\[
\hat\partial_\xi(f)\!=\!\tr(\xi\hat D(f))\!=\!\tr\left(\!\left(\xi^i_jE_i^j\right)\!\left(\sum_k a_k(f)(e^k)^p_qE_p^q\right)\!\right)\!\!=\!\!\sum_k\! a_k(f)\tr(\xi^i_j(e^k)^j_qE^q_i).
\]
It now follows that for any coefficient matrix $\xi$ the operator $\hat\partial_\xi$
sends the any $f,\,g\in\mathcal Z(U\mathfrak{gl}_d)$ into a pair of commuting elements, i.e.
\[
[\hat\partial_\xi f,\hat\partial_\xi g]=0,\ \forall f,g\in\mathcal Z(U\mathfrak{gl}_d).
\]
Indeed by a straightforward computation (see formula \eqref{eq:commrelimp}) one can show that $[\tr(\xi e^m),\tr(\xi e^n)]=0$. However, already the next step, i.e. when we consider the elements $\hat\partial_\xi^2 f$, is much more complicated and explicit formulas for the second quasi-derivative of central elements are pretty hard to obtain, and when obtained, they are quite cumbersome, so commutation relations are hardly visible, see \cite{Yasushi23}. 

Before we proceed to the proof of the general case of theorem \ref{thm:main}, let us make an important remark, concerning the operations $\hat\partial_\xi$. The action of $GL_d$ on $U\mathfrak{gl}_d$ (given by the conjugation of $e$) can be extended to these operations, so that
\[
(\hat\partial_\xi^p f)^g=\hat\partial^p_{\xi^g} f^g
\]
for all $f\in U\mathfrak{gl}_d,\,p=1,2,\dots$, where $\xi^g=g^{-1}\xi g$. In particular, if $f\in\mathcal Z(U\mathfrak{gl}_d)$ we have 
\[
(\hat\partial_\xi^p f)^g=\hat\partial^p_{\xi^g} f.
\]
Since diagonalizable matrices with simple spectrum are dense in the space of (real or complex) matrices, we conclude that the general statement of the theorem \ref{thm:main} follows from the situation, when $\xi=\mathrm{diag}(\xi_1,\dots,\xi_d)$. Thus without loss of generality we can assume that $\xi$ is diagonal and $\xi_i\neq\xi_j$ when $i\neq j$.

\section{Proof of the theorem \ref{thm:main}}
Suppose that $\xi=\diag(\xi_1,\dots,\xi_d)$ is a diagonal numerical matrix with disjoint spectrum. Hence it is regular element in the Lie algebra $\mathfrak{gl}_d$, so we can use the statement of theorem \ref{thm:Ryb}. Now the result would follow if we show that \textit{iterated quasiderivations $\hat\partial_\xi^p f$ of central element $f$ belong to the centralizer of diagonal matrices and the elements $\hat T_i(\xi)$}.

Both conditions (i.e. centrilization of diagonal matrices and of $\hat T_i(\xi)$) now can be proved by induction on $p$. We begin with diagonal matrices: if $p=0$, the condition clearly holds. Now we can assume that $e_i^i\,f=f\,e_i^i$ and apply the operator $\hat\partial_\xi=\sum_{j=1}^d\xi_j\hat\partial_j^j$ to both sides. We get on the left
\[
\begin{aligned}
\hat\partial_\xi(e_i^i\,f)&=\sum_{j=1}^d\xi_j\hat\partial_j^j(e_i^i) f+e_i^i\hat\partial_\xi f+\sum_{j,k=1}^d\xi_j\hat\partial^k_j(e_i^i)\hat\partial_k^j(f)\\
&=\sum_{j=1}^d\xi_j\delta_j^i\delta^j_i f+e_i^i\hat\partial_\xi f+\sum_{j,k=1}^d\xi_j\delta_j^i\delta^k_i\hat\partial_k^j(f)\\
&=e_i^i\hat\partial_\xi f+\xi_i f+\xi_i\hat\partial_i^if.
\end{aligned}
\]
On the right hand side of the original equation, similar computation gives
\[
\hat\partial_\xi(f\,e_i^i)=\hat\partial_\xi f\,e_i^i+\xi_i f+\xi_i\hat\partial_i^if,
\]
and so $e_i^i\,\hat\partial_\xi f=\hat\partial_\xi f\,e_i^i$

Now we turn to the second condition: 
\[
\hat T_i(\xi)\hat\partial_\xi^p f=\hat\partial_\xi^p f\hat T_i(\xi)
\]
for all $i=1,\dots,d$ and $f\in\mathcal Z(U\mathfrak{gl}_d)$.

This is again done by induction: for $p=0$ this is clear. Next, we apply $\hat\partial_\xi$ to both sides of the equation $\hat T_i(\xi)\,f=f\,\hat T_i(\xi)$. We have
\begin{equation}\label{eq:scalar}
    \hat\partial_\xi(\hat T_i(\xi))=\hat\partial_\xi\left(\sum_{j\neq i}\frac{e^j_ie^i_j}{\xi_i-\xi_j}\right)=\sum_{j\neq i}\frac{\xi_j}{\xi_i-\xi_j}
\end{equation}
since
\begin{align}
    \hat\partial^p_q\sum_{j\neq i}\frac{e^j_ie^i_j}{\xi_i-\xi_j}&=\sum_{j\neq i}\frac{\delta^p_i\delta^j_qe^i_j+\delta^p_j\delta^i_qe^j_i+ \delta^j_q\delta^p_j}{\xi_i-\xi_j},\\
\intertext{and hence}
\notag
      \hat\partial_\xi(\hat T_i(\xi))&=\sum_{k=1}^d\sum_{j\neq i}\xi_k\frac{\delta^k_i\delta^j_ke^i_j+\delta^k_j\delta^i_ke^j_i+\delta^j_k\delta^k_j}{\xi_i-\xi_j}\\
      &=\sum_{j\neq i}\frac{\xi_j}{\xi_i-\xi_j}.
\end{align}
For future references let us also give here the following formula
\begin{equation}
\label{eq:second}
    \partial^p_q\partial^r_s\sum_{j\neq i}\frac{e^j_ie^i_j}{\xi_i-\xi_j}=\sum_{j\neq i}\frac{\delta^p_j\delta^i_q\delta^r_i\delta^j_s+\delta^p_i\delta^j_q\delta^r_j\delta^i_s}{\xi_i-\xi_j}.
\end{equation}
To prove the theorem we will need the next Lemma
\begin{Lem}
    \label{prop:center}
    The center of the the universal enveloping algebra is contained in the module
    \begin{equation}\label{eq:module}
        \biggset{x\in U\mathfrak{gl}_d\mid\tr\Bigl(\xi\bigl[\hat D(\hat T_i(\xi)),\hat D x\bigr]\Bigr)=0}.
    \end{equation}
    Here $\xi$ is the diagonal matrix of coefficients.
\end{Lem}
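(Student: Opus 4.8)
The plan is to fix an index $i$ and reduce the assertion to a family of identities that live entirely inside $U\mathfrak{gl}_d$. Recall from Section~2.3 above that for a central element $x$ one has $\hat Dx=\sum_{k\ge0}a_k(x)(e^k)^T$ with all coefficients $a_k(x)\in\mathcal Z(U\mathfrak{gl}_d)$. Since these coefficients commute with every entry of the matrices occurring and $\xi$ is numerical, they can be pulled out of the matrix commutator and of the trace, so that
\[
\tr\bigl(\xi[\hat D(\hat T_i(\xi)),\hat Dx]\bigr)=\sum_k a_k(x)\,\tr\bigl(\xi[\hat D(\hat T_i(\xi)),(e^k)^T]\bigr).
\]
Hence it is enough to prove that $\tr\bigl(\xi[\hat D(\hat T_i(\xi)),(e^k)^T]\bigr)=0$ in $U\mathfrak{gl}_d$ for every $i=1,\dots,d$ and every integer $k\ge0$; the stated inclusion of the center into the module then follows at once.

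The second step is to make $\hat D(\hat T_i(\xi))$ explicit. From the formula for $\hat\partial^p_q\hat T_i(\xi)$ displayed above one reads off that this matrix is supported only on the $i$-th row, the $i$-th column and the diagonal: up to the scalar factors $1/(\xi_i-\xi_j)$ its off-diagonal entries are the generators $e^i_j$ (in the $i$-th column) and $e^j_i$ (in the $i$-th row), for $j\ne i$, while its diagonal entries are the scalars $1/(\xi_i-\xi_j)$. Plugging this into $\tr\bigl(\xi[\hat D(\hat T_i(\xi)),(e^k)^T]\bigr)$ and using that $\xi$ is diagonal, the purely scalar (diagonal) contributions cancel between the two halves of the commutator, and in each remaining term one applies \eqref{eq:three} to commute a generator $e^i_j$ past $(e^k)^j_i$, resp.\ $e^j_i$ past $(e^k)^i_j$. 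Each commutator thus produced carries a factor $\pm(\xi_j-\xi_i)$ which cancels the denominator $\xi_i-\xi_j$; the crucial point is that after this cancellation the dependence on $\xi$ disappears completely and one is left with the $\xi$-free expression
\[
\tr\bigl(\xi[\hat D(\hat T_i(\xi)),(e^k)^T]\bigr)=\sum_{j\ne i}\Bigl((e^k)^i_j\,e^j_i-(e^k)^j_i\,e^i_j+(e^k)^j_j-(e^k)^i_i\Bigr).
\]

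Finally I would check that this last sum vanishes. Its $j=i$ summand is $0$, so the sum may be extended to all $j$. By the definition of matrix powers $\sum_j(e^k)^i_j\,e^j_i=(e^{k+1})^i_i$, while $\sum_j(e^k)^j_j=\tau_k$ and $\sum_j(e^k)^i_i=d\,(e^k)^i_i$; and pushing $(e^k)^j_i$ past $e^i_j$ by \eqref{eq:three} gives $(e^k)^j_i\,e^i_j=e^i_j\,(e^k)^j_i+(e^k)^j_j-(e^k)^i_i$, hence $\sum_j(e^k)^j_i\,e^i_j=(e^{k+1})^i_i+\tau_k-d\,(e^k)^i_i$. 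Adding the four contributions gives $0$, which establishes the reduced identity and therefore the lemma. The one genuinely delicate point is the middle step: one has to perform the expansion of $\tr\bigl(\xi[\hat D(\hat T_i(\xi)),(e^k)^T]\bigr)$ attentively enough to see that every $\xi_j$ cancels and to arrive at the clean closed form above; once that form is reached, its vanishing is a one-line manipulation with the standard commutation identities for powers of the generating matrix $e$.
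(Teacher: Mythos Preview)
Your proposal is correct and follows essentially the same route as the paper: reduce to checking $\tr\bigl(\xi[\hat D(\hat T_i(\xi)),(e^k)^T]\bigr)=0$ by pulling out the central coefficients $a_k(x)$, then expand the trace explicitly using the formula for $\hat\partial^p_q\hat T_i(\xi)$ and the relation \eqref{eq:three}, and check that the result vanishes.

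The only real difference is the shape of the intermediate $\xi$-free expression. The paper commutes slightly differently and lands on
\[
\sum_{j\ne i}\Bigl(e^i_j(e^k)^j_i-(e^k)^i_j e^j_i\Bigr),
\]
which is dispatched in one line. Your closed form
\[
\sum_{j\ne i}\Bigl((e^k)^i_j e^j_i-(e^k)^j_i e^i_j+(e^k)^j_j-(e^k)^i_i\Bigr)
\]
is an equivalent rearrangement (the two differ by an application of \eqref{eq:three}), and your final paragraph handles it correctly. One cosmetic remark: your description ``each commutator thus produced carries a factor $\pm(\xi_j-\xi_i)$'' is not quite what happens --- the factor $(\xi_i-\xi_j)$ appears only after you \emph{combine} the commutator terms coming from the $\xi_i$- and $\xi_j$-pieces; individually each commutator carries a bare $\xi_i$ or $\xi_j$. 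This does not affect the argument, but tightening that sentence would make the write-up cleaner.
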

\begin{proof}
    It is sufficient to show that we have
    \begin{equation}
        \tr\Bigl(\xi\bigl[\hat D(T_i(\xi)),(e^n)^T\bigr]\Bigr)=0
    \end{equation}
    for any nonnegative integer $n$ since (see Proposition \ref{prop:formula} and discussion after it) the first matrix-valued quasiderivation $\hat D(x)$ of a central element $x$ is contained in the module over the center generated by the matrices $(e^n)^T$. We have
    \begin{align*}
        \MoveEqLeft\tr\Bigl(\xi\Bigl[\hat D\left(\sum_{j\neq i}\frac{e^j_i
        e^i_j}{\xi_i-\xi_j}\right),(e^n)^T\Bigr]\Bigr)\\
        &=\sum_{k,\ell=1}^d\left(\xi_k\left(\hat\partial^k_\ell\sum_{j\neq i}\frac{e^j_ie^i_j}{\xi_i-\xi_j}\right)(e^n)^\ell_k-\xi_\ell(e^n)^\ell_k\left(\hat\partial^k_\ell\sum_{j\neq i}\frac{e^j_ie^i_j}{\xi_i-\xi_j}\right)\right)\\
        &=\sum_{k,\ell=1}^d\xi_k\left(\sum_{j\neq i}\frac{\delta^k_i\delta^j_\ell e^i_j+\delta^k_j\delta^i_\ell e^j_i+ \delta^j_\ell\delta^k_j}{\xi_i-\xi_j}\right)(e^n)^\ell_k\\
        &\qquad\qquad\qquad-\xi_\ell(e^n)^\ell_k\left(\sum_{j\neq i}\frac{\delta^k_i\delta^j_\ell e^i_j+\delta^k_j\delta^i_\ell e^j_i+ \delta^j_\ell\delta^k_j}{\xi_i-\xi_j}\right)\\
        &=\sum_{j\neq i}\frac{\xi_ie^i_j(e^n)^j_i+\xi_je^j_i(e^n)^i_j+\xi_j(e^n)^j_j-\xi_j(e^n)^j_ie^i_j-\xi_i(e^n)^i_je^j_i-\xi_j(e^n)^j_j}{\xi_i-\xi_j}\\
        &=\sum_{j\neq i}\Bigl(e^i_j(e^n)^j_i-\frac{\xi_j}{\xi_i-\xi_j}\bigl[(e^n)^j_i,e^i_j\bigr]-(e^n)^i_je^j_i+\frac{\xi_j}{\xi_i-\xi_j}\bigl[e^j_i,(e^n)^i_j\bigr]\Bigr)\\
        &=(e^{n+1})^i_i-e^i_i(e^n)^i_i-(e^{n+1})^i_i+(e^n)^i_ie^i_i=0,
    \end{align*}
    where in the last two lines we used the commutation relation \eqref{eq:three}.
\end{proof}

\begin{prop}\label{prop:invariant}
    The module \eqref{eq:module} is invariant under the action of the quasi-derivation $\hat\partial_\xi$.
\end{prop}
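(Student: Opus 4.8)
The plan is to prove, for each fixed index $i$, the operator identity $\Delta_i\circ\hat\partial_\xi=\hat\partial_\xi\circ\Delta_i$ on $U\mathfrak{gl}_d$, where $\Delta_i$ denotes the linear operator $x\mapsto\tr\bigl(\xi\,[\hat D(\hat T_i(\xi)),\hat Dx]\bigr)$. Since the module \eqref{eq:module} is exactly $\ker\Delta_i$, this identity proves the proposition immediately: if $\Delta_i(x)=0$ then $\Delta_i(\hat\partial_\xi x)=\hat\partial_\xi\bigl(\Delta_i(x)\bigr)=0$. It is worth noticing that, combining \eqref{eq:scalar} with the twisted Leibniz rule \eqref{eq:twleibrule} and the fact that scalars are central, one gets $\Delta_i(y)=\hat\partial_\xi[\hat T_i,y]-[\hat T_i,\hat\partial_\xi y]$ for every $y\in U\mathfrak{gl}_d$; thus $\Delta_i=[\hat\partial_\xi,\mathrm{ad}_{\hat T_i}]$ as operators, and the identity we must prove reads $[\hat\partial_\xi,[\hat\partial_\xi,\mathrm{ad}_{\hat T_i}]]=0$ --- the literal analogue inside $U\mathfrak{gl}_d$ of Vinberg's relation $\mathcal L_\xi^2\pi=0$.

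To prove $\hat\partial_\xi\circ\Delta_i=\Delta_i\circ\hat\partial_\xi$ I would apply $\hat\partial_\xi$ to $\Delta_i(x)=\sum_{a,b}\xi_a\bigl((\hat D\hat T_i)^a_b(\hat Dx)^b_a-(\hat Dx)^a_b(\hat D\hat T_i)^b_a\bigr)$, expanding each product by the identity $\hat\partial_\xi(fg)=(\hat\partial_\xi f)g+f(\hat\partial_\xi g)+\tr(\xi\,\hat Df\,\hat Dg)$, which is what \eqref{eq:leibnizmatrix} becomes after applying $\tr(\xi\,\cdot\,)$, and using in addition the relation $\hat D\hat\partial_\xi=\hat\partial_\xi\hat D$ (entrywise), a consequence of the commutativity of the operators $\hat\partial^p_q$. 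The resulting terms fall into three groups. First, the terms in which $\hat\partial_\xi$ acts on a factor coming from $x$ reassemble, via $\hat\partial_\xi\bigl((\hat Dx)^b_a\bigr)=\bigl(\hat D(\hat\partial_\xi x)\bigr)^b_a$, into $\tr\bigl(\xi\,[\hat D\hat T_i,\hat D(\hat\partial_\xi x)]\bigr)=\Delta_i(\hat\partial_\xi x)$. Second, the terms in which $\hat\partial_\xi$ acts on a factor coming from $\hat T_i$ vanish, because $\hat\partial_\xi(\hat D\hat T_i)=\hat D(\hat\partial_\xi\hat T_i)=\hat D(c_i)=0$, the element $c_i=\sum_{j\ne i}\xi_j/(\xi_i-\xi_j)$ being a scalar by \eqref{eq:scalar}. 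Third, there remains a single ``cross'' contribution, coming from the last summand of the Leibniz rule, which pairs the second matrix quasi-derivative $\hat D\hat D\hat T_i$ against $\hat D\hat D x$: explicitly it is $\sum_{a,b}\xi_a\bigl(\tr(\xi\,\hat D((\hat D\hat T_i)^a_b)\,\hat D((\hat Dx)^b_a))-\tr(\xi\,\hat D((\hat Dx)^a_b)\,\hat D((\hat D\hat T_i)^b_a))\bigr)$.

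It then remains to show that this cross contribution is zero. Here one uses that $\hat D\hat T_i$ is very sparse: apart from the diagonal scalars $(\xi_i-\xi_j)^{-1}$ (for $j\ne i$), its only nonzero entries are $e^i_j/(\xi_i-\xi_j)$ in row $i$ and $e^j_i/(\xi_i-\xi_j)$ in column $i$. Hence $\hat D\hat D\hat T_i$ is a constant (numerical) tensor, supported on a handful of matrix units with coefficients $(\xi_i-\xi_j)^{-1}$; alternatively one may read it off from \eqref{eq:second}, which for the quadratic element $\hat T_i$ delivers the second quasi-derivative along with the second ordinary derivative. Substituting this, using that $\xi$ is diagonal, and using the commutativity $\hat\partial^i_j\hat\partial^j_i=\hat\partial^j_i\hat\partial^i_j$, a short index chase shows that the two traces in the cross contribution coincide --- each equal, say, to $2\sum_{j\ne i}\frac{\xi_i\xi_j}{\xi_i-\xi_j}\hat\partial^i_j\hat\partial^j_i x$ --- so their difference vanishes. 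This establishes $\hat\partial_\xi\bigl(\Delta_i(x)\bigr)=\Delta_i(\hat\partial_\xi x)$ for all $x$, hence the $\hat\partial_\xi$-invariance of \eqref{eq:module}.

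The only genuinely computational step is this last one --- locating the few nonzero components of $\hat D\hat D\hat T_i$ and checking that the two traces agree --- and the point demanding care there is the bookkeeping of indices, in particular which tensor slot carries the weight $\xi_a$ and the transposition conventions built into $\hat D$. All the preceding manipulations are purely formal consequences of the twisted Leibniz rule \eqref{eq:twleibrule} (in its matrix form \eqref{eq:leibnizmatrix}) and the commutativity of the quasi-derivations, so I expect no obstacle beyond that finite calculation.
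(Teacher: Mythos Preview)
Your proposal is correct and follows essentially the same route as the paper: apply $\hat\partial_\xi$ to $\tr\bigl(\xi[\hat D\hat T_i,\hat Dx]\bigr)$, expand by the twisted Leibniz rule, kill the term where $\hat\partial_\xi$ hits $\hat T_i$ via \eqref{eq:scalar}, and dispose of the quadratic cross term via \eqref{eq:second} together with the commutativity $\hat\partial^i_j\hat\partial^j_i=\hat\partial^j_i\hat\partial^i_j$. The only (harmless) difference is that you package the computation as the operator identity $\hat\partial_\xi\circ\Delta_i=\Delta_i\circ\hat\partial_\xi$ --- a slightly stronger statement than kernel invariance, with the pleasant interpretation $[\hat\partial_\xi,[\hat\partial_\xi,\mathrm{ad}_{\hat T_i}]]=0$ --- whereas the paper starts from $\Delta_i(x)=0$ and deduces $\Delta_i(\hat\partial_\xi x)=0$; the underlying calculation is the same.
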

\begin{proof}
    Suppose that $x$ is an element of the module \eqref{eq:module}. We have
    \begin{align*}
        0&=\hat\partial_\xi\tr\Bigl(\xi\bigl[\hat D\sum_{j\neq i}\frac{e^j_ie^i_j}{\xi_i-\xi_j},\hat D x\bigr]\Bigr)\\&=
        \begin{multlined}[t]
            \tr\Bigl(\xi\bigl[\hat D\sum_{j\neq i}\frac{e^j_ie^i_j}{\xi_i-\xi_j},\hat D\hat\partial_\xi x\bigr]\Bigr)+\tr\Bigl(\xi\bigl[\hat D\hat\partial_\xi\sum_{j\neq i}\frac{e^j_ie^i_j}{\xi_i-\xi_j},\hat D x\bigr]\Bigr)\\
+\sum_{p,q,k,\ell=1}^d\Bigl(\xi_q\xi_k\left(\hat\partial^p_q\hat\partial^k_\ell\sum_{j\neq i}\frac{e^j_ie^i_j}{\xi_i-\xi_j}\right)\bigl(\hat\partial^q_p\hat\partial^\ell_k x\bigr)\Bigr)\\
            -\xi_p\xi_\ell\bigl(\hat\partial^q_p\hat\partial^\ell_k x\bigr)\left(\hat\partial^p_q\hat\partial^k_\ell\sum_{j\neq i}\frac{e^j_ie^i_j}{\xi_i-\xi_j}\right)
        \end{multlined}
    \end{align*}
    since the quasiderivations commute. We have
    \begin{equation}
        \tr\Bigl(\xi\bigl[\hat D\hat\partial_\xi\sum_{j\neq i}\frac{e^i_je^j_i}{\xi_i-\xi_j},\hat D x\bigr]\Bigr)=0
    \end{equation}
    by the equation \eqref{eq:scalar}. Finally we have
    \begin{multline}
\sum_{p,q,k,\ell=1}^d\Bigl(\xi_q\xi_k\!\left(\hat\partial^p_q\hat\partial^k_\ell\sum_{j\neq i}\frac{e^j_ie^i_j}{\xi_i-\xi_j}\right)\!\bigl(\hat\partial^q_p\hat\partial^\ell_k x\bigr)-\xi_p\xi_\ell\bigl(\hat\partial^q_p\hat\partial^\ell_k x\bigr)\!\left(\hat\partial^p_q\hat\partial^k_\ell\sum_{j\neq i}\frac{e^j_ie^i_j}{\xi_i-\xi_j}\right)\Bigr)\\
        =\sum_{p,q,k,\ell=1}^d\sum_{j\neq i}\bigl(\delta^p_j\delta^i_q\delta^k_i\delta^j_\ell+\delta^p_i\delta^j_q\delta^k_j\delta^i_\ell\bigr)\frac{\xi_q\xi_k-\xi_p\xi_\ell}{\xi_i-\xi_j}\hat\partial^q_p\hat\partial^\ell_k x=0
    \end{multline}
    by the equation \eqref{eq:second}.
\end{proof}

Now it follows by induction that for any central element $x$ of the universal enveloping algebra its iterated quasiderivation $\hat\partial_\xi^m x$ is in the module \eqref{eq:module}, i.e. that
\begin{equation}\label{eq:trace}
    \tr\Bigl(\xi\bigl[\hat D(\hat T_i
(\xi)),\hat D\hat\partial_\xi^mx\bigr]\Bigr)=0
\end{equation}
for all $i=1,\dots,d$ and any nonnegative integer $m$ by Proposition \ref{prop:center} and Proposition \ref{prop:invariant}.

We can finally prove that the second condition also holds:
\begin{prop}
    For any nonnetagive integer $m$ we have
    \begin{equation}
        \left[\hat T_i(\xi),\partial_\xi^mx\right]=0
    \end{equation}
    for all $i=1,\dots,d$ and any central element $x$.
\end{prop}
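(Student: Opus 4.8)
The plan is to prove this Proposition by induction on $m$, in exactly the same spirit as the induction already carried out above for the diagonal matrices $e^i_i$. The base case $m=0$ is nothing but the centrality of $x$: $[\hat T_i(\xi),x]=0$. For the inductive step I assume $[\hat T_i(\xi),\hat\partial_\xi^m x]=0$ and apply the directional quasi-derivation $\hat\partial_\xi=\tr(\xi\hat D)$ to the identity $\hat T_i(\xi)\,\hat\partial_\xi^m x=\hat\partial_\xi^m x\,\hat T_i(\xi)$, expanding each side by the matrix Leibniz rule \eqref{eq:leibnizmatrix}.

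Concretely, I first record the scalar form of the product rule: since $\xi$ is a diagonal numerical matrix, for all $a,b\in U\mathfrak{gl}_d$ one has
\[
\hat\partial_\xi(ab)=(\hat\partial_\xi a)\,b+a\,(\hat\partial_\xi b)+\tr\bigl(\xi\,\hat Da\,\hat Db\bigr),
\]
the last term being a genuine matrix product of $\hat Da$ and $\hat Db$, premultiplied by $\xi$ and traced. Applying this identity once with $(a,b)=(\hat T_i(\xi),\hat\partial_\xi^m x)$ and once with $(a,b)=(\hat\partial_\xi^m x,\hat T_i(\xi))$, the two left-hand sides coincide by the inductive hypothesis. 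On the right-hand sides, the terms $(\hat\partial_\xi\hat T_i(\xi))\,\hat\partial_\xi^m x$ and $\hat\partial_\xi^m x\,(\hat\partial_\xi\hat T_i(\xi))$ cancel, since $\hat\partial_\xi\hat T_i(\xi)$ is a scalar by \eqref{eq:scalar} and hence central; the remaining ``pure'' terms give exactly $\hat T_i(\xi)\,\hat\partial_\xi^{m+1}x$ and $\hat\partial_\xi^{m+1}x\,\hat T_i(\xi)$; and the two cross terms combine into $-\tr\bigl(\xi\,[\hat D(\hat T_i(\xi)),\hat D\hat\partial_\xi^m x]\bigr)$. Thus
\[
[\hat T_i(\xi),\hat\partial_\xi^{m+1}x]=-\tr\bigl(\xi\,[\hat D(\hat T_i(\xi)),\hat D\hat\partial_\xi^m x]\bigr)=0,
\]
the vanishing being precisely the relation \eqref{eq:trace}, already established for the central element $x$ by Lemma \ref{prop:center} and Proposition \ref{prop:invariant}. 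This closes the induction.

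Putting this together with the induction for the diagonal matrices, every iterated quasi-derivation $\hat\partial_\xi^p f$ of a central element $f$ commutes with all diagonal matrices and with all $\hat T_i(\xi)$; by Theorem \ref{thm:Ryb} it therefore lies in $\mathcal A_\xi(\mathfrak{gl}_d)$, which is a commutative subalgebra of $U\mathfrak{gl}_d$. Hence $[\hat\partial_\xi^p f,\hat\partial_\xi^q g]=0$ for all $f,g\in\mathcal Z(U\mathfrak{gl}_d)$ and all $p,q$ when $\xi=\diag(\xi_1,\dots,\xi_d)$ with $\xi_i\neq\xi_j$, and the general case of Theorem \ref{thm:main} follows from the $GL_d$-equivariance of $\hat\partial_\xi$ recalled at the end of Section 2.

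The only delicate point is the index bookkeeping in the middle paragraph: one must verify that, with $\xi$ diagonal, the ``scalar'' first-order terms genuinely factor out of the trace and cancel, and that the two matrix cross terms reassemble, with the correct sign, into the commutator $[\hat D(\hat T_i(\xi)),\hat D\hat\partial_\xi^m x]$ appearing in \eqref{eq:trace}. This is routine but somewhat lengthy; no idea is needed beyond the matrix Leibniz rule, the commutativity of the operators $\hat\partial^p_q$, and the already-proven relation \eqref{eq:trace}.
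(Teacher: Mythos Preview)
Your proof is correct and follows essentially the same approach as the paper's: both argue by induction on $m$, apply $\hat\partial_\xi$ to the inductive hypothesis via the (twisted/matrix) Leibniz rule, cancel the first-order term using that $\hat\partial_\xi\hat T_i(\xi)$ is a scalar by \eqref{eq:scalar}, and kill the cross term by invoking \eqref{eq:trace}. The only cosmetic difference is that the paper applies $\hat\partial_\xi$ directly to the commutator $[\hat T_i(\xi),\hat\partial_\xi^{m-1}x]$, while you expand the two products separately and subtract; the resulting identities are the same.
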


\begin{proof}
    The proof is by induction on $m$. The base of induction is evident. Suppose now that for some $m>0$ we have
    \begin{equation}
        \Bigl[\hat T_i(\xi),\hat\partial_\xi^{m-1}x\Bigr]=0,
    \end{equation}
    i.e. let the inductive hypothesis hold. Then we have
    \begin{align*}
        0&=\hat\partial_\xi\left[\hat T_i(\xi),\partial_\xi^{m-1}x\right]\\&=
        \begin{multlined}[t]
            \Bigl[\hat\partial_\xi\hat T_i(\xi),\hat\partial_\xi^{m-1}x\Bigr]+\Bigl[\hat T_i(\xi),\hat\partial_\xi^mx\Bigr]\\
            +\tr\Bigl(\xi\Bigl[\hat D(\hat T_i(\xi)),\hat D\hat\partial_\xi^{m-1}x\Bigr]\Bigr)
        \end{multlined}
    \end{align*}
    by the twisted Leibniz rule. But
    \begin{equation*}
        \Bigl[\hat\partial_\xi(\hat T_i(\xi)),\hat\partial_\xi^{m-1}x\Bigr]=0
    \end{equation*}
    by the equation \eqref{eq:scalar} and
    \begin{equation*}
        \tr\Bigl(\xi\bigl[\hat D(\hat T_i(\xi)),\hat D\hat\partial_\xi^{m-1}x\bigr]\Bigr)=0
    \end{equation*}
    by the equation \eqref{eq:trace}.
\end{proof}
Hence if $\xi$ is a diagonal matrix with simple spectrum, then for all $f\in\mathcal Z(S\mathfrak{gl}_d)$ and all $p=0,1,2,\dots$ the elements $\hat\partial_\xi^pf$ lie in the centraliser both of the diagonal subalgebra in $\mathfrak{gl}_d$ and the subspace, spanned by the elements $\hat T_i(\xi)$, and hence by theorem \ref{thm:Ryb} in $\mathcal A_\xi(\mathfrak{gl}_d)$. So, these elements commute with each other. Since regular diagonalizable matrices are dense in $\mathfrak{gl}_d$, the same is true for all $\xi$ and the theorem \ref{thm:main} is proved.

\section{Conclusive remarks and observations}
Let us conclude this paper with a short list of questions, remarks and observations related with the constructions that we used here.

First of all we remark that the theorem we proved allows one to obtain the elements in the algebra $\mathcal A_\xi(\mathfrak{gl}_d)$ in a rather straightforward way, by direct computations: for any element $f\in A_\xi(\mathfrak{gl}_d)\subseteq S\mathfrak{gl}_d$, equal to the iterated derivation $\partial_\xi^pF$ of some element in the center $F\in\mathcal A_\xi(\mathfrak{gl}_d)$, we have an element $\hat f=\hat\partial_\xi^p\sigma(F)\in\mathcal A_\xi(\mathfrak{gl}_d)$ that raises $f$. This method to raise the elements from $A_\xi(\mathfrak{gl}_d)$ to the Mishchenko-Fomenko algebras in $U\mathfrak{gl}_d$, is rather easy compared to previous approaches, which asked for a multy-stage procedure: first one has to express the element $F$ in terms of some standard generators of the Poisson center (usually, in terms of $\tilde I_k$), use this to express $f$ in terms of iterated derivations of the generators, use the standard formulas to raise the iterated derivations into $U\mathfrak{gl}_d$ and restore $\hat f$ from this information.

Next, it is clear that the same statement for the ``quasi-derivations of the second type'', also introduced by Gurevich, Pyatov and Saponov. Namely, in addition to the operations $\hat\partial^i_j$, they introduced the operators $\bar\partial_j^i$, which verify the same set of relations as $\hat\partial_j^i$ (see definition 1), except for the different type of Leibniz rule, \eqref{eq:twleibrule}: we have
 \begin{equation}\label{eq:twleibrule'}
\bar\partial^i_j(fg)=\bar\partial^i_jf\,g+f\,\bar\partial^i_jg-\sum_{k=1}^d\bar\partial^i_kf\,\bar\partial^k_jg.
    \end{equation}
The proof is obtained from the one we presented here in a word-for-word manner, only changing some notation. The elements obtained by using $\bar\partial_j^i$ instead of $\hat\partial_j^i$ will also lie inside the Mishchenko-Fomenko algebra $\mathcal A_\xi(\mathfrak{gl}_d)$, so in particular $[\hat\partial_\xi^pf,\bar\partial_\xi^qg]=0$ for all matrices of coefficients $\xi$, all $f,g\in\mathcal Z(U\mathfrak{gl}_d)$ and all $p,q=0,1,2,\dots$. 

In general, one can associate a family of quasiderivations to any representation of the Lie algebra $\mathfrak{gl}_d$ (see paper \cite{mypaper23}). It looks plausible that these maps will verify a similar  property (either all of them or under some simple conditions), however we postpone consideration of the general situation to a later paper. It is also possible to extend the notion of quasiderivations to the universal enveloping algebra $U\mathfrak g$ of an arbitrary Lie algebra $\mathfrak g$ and arbitrary representation $\rho$ of $\mathfrak g$. The question, when the corresponding elements will commute with each other is postponed to a forthcoming paper.

Finally, let us consider a couple of simple corollaries that are derived from the principal result of the present paper. First of all, recall that in paper \cite{mypaper23} we reduced the question whether the elements $\hat\partial_\xi^pf$ and $\hat\partial_\xi^qg$ commute to the question of commutativity of the following elements in $\mathrm{End}((\mathbb R^n)^{\otimes k})$: for any $P,Q$ in the centers of the group algebras $S_{k+p}$ and $S_{k+q}$ respectively we take the elements
\[
\begin{aligned}
\varphi_\xi&=\tr_{1,\dots,p}((\xi^{\otimes p}\otimes\mathbbm 1^{\otimes k})P)\\
\psi_\xi&=\tr_{1,\dots,q}((\xi^{\otimes q}\otimes\mathbbm 1^{\otimes k})Q)
\end{aligned}
\]
where we let the groups $S_N$ act on the tensor products $(\mathbb R^n)^{\otimes N}$ by permutations of the tensor legs and $\tr_{1,\dots,p}$ denotes the trace with respect to the first $p$ tensor legs. Then $[\varphi_\xi,\psi_\xi]=0$.

Another interesting collection of corollaries that can be derived from the main result of this paper appears when we represent the algebra $U\mathfrak{gl}_d$ by differential operators. For instance, one can embed $U\mathfrak{gl}_d$ into the algebra of differential operators on $GL_d$ as the subalgebra of left- (or right-) invariant operators on the group. For instance, in standard coordinates $x_i^j$ on the group, the generators $e^p_q$ of $\mathfrak{gl}_d$ can be represented by the elements
\[
X^p_q=\sum_{k=1}^dx^p_k\partial^k_q.
\]
Here $\partial_q^p=\frac{\partial}{\partial x^q_p}$ as usual. One now can reinterpret the operators $\hat\partial^j_i$ in terms of commutators of the differential operators on $GL_d$, see \cite{mypaper23}. This gives one a vast series of examples of commuting differential operators in variables $x_i^j$; their relation with other known examples of this sort (see for instance \cite{Orlov23}) is an interesting open question.

\end{document}